\newtheorem{theorem}{Theorem}[section]
\newtheorem{proposition}[theorem]{Proposition}
\newtheorem{fact}[theorem]{Fact}
\numberwithin{equation}{section}
\newcommand{\NN}{\mathbb{N}}
\newcommand{\w}{\omega}
\newcommand{\TTT}{\mathcal{T}}
\newcommand{\VV}{\mathbb{V}}
\newcommand{\Nn}{\mathcal{N}}
\newcommand{\AAA}{\mathcal A}
\newcommand{\IR}{\mathbb{R}}
\newcommand{\aaa}{\mathbf{a}}
\newcommand{\Ss}{\mathbb{S}}
\newcommand{\Ff}{\mathfrak{F}}
\newcommand{\VVs}{\VV_{\mathbf{s}}}
\renewcommand{\phi}{\varphi}
\newcommand{\conv}{\mathrm{conv}}
\title[The free locally convex space $L(\mathbf{s})$ is not a Mackey space]{The free locally convex space $L(\mathbf{s})$ over a convergent sequence $\mathbf{s}$ is not a Mackey space}
\author[S.~Gabriyelyan]{Saak Gabriyelyan}
\address{Department of Mathematics, Ben-Gurion University of the
Negev, Beer-Sheva, P.O. 653, Israel}
\email{saak@math.bgu.ac.il}
\subjclass[2000]{Primary 22A10; Secondary 54H11}
\keywords{the Graev free abelian topological group, Mackey group topology}
\begin{document}

\begin{abstract}
We show that the free locally convex space $L(\mathbf{s})$ over a convergent sequence $\mathbf{s}$ is not a Mackey space. Consequently $L(\mathbf{s})$ is not a Mackey group that answers negatively a question posed in \cite{Gab-Mackey}.
\end{abstract}

\maketitle


\section{Introduction}


Let $(E,\tau)$ be a locally convex space. A locally convex vector topology $\nu$ on $E$ is called {\em compatible with $\tau$} if the spaces $(E,\tau)$ and $(E,\nu)$ have the same topological dual space. The classical  Mackey--Arens theorem states that for every locally convex space $(E,\tau)$ there exists the finest locally convex vector space topology $\mu$ on $E$ compatible with $\tau$. The topology $\mu$ is called the {\em Mackey topology} on $E$ associated with $\tau$, and if $\mu=\tau$, the space $E$ is called a {\em Mackey space}.

For an abelian topological group $(G,\tau)$ we denote by $\widehat{G}$ the group of all continuous characters of $(G,\tau)$.
Two topologies  $\mu$ and $\nu$ on an abelian group $G$  are said to be {\em compatible } if $\widehat{(G,\mu)}=\widehat{(G,\nu)}$.
Being motivated by the Mackey--Arens Theorem and following \cite{CMPT},  a locally quasi-convex  abelian group $(G,\mu)$ is called a {\em Mackey group} if for every locally quasi-convex group topology $\nu$ on $G$ compatible with $\tau$  it follows that $\nu\leq\mu$.
It follows from Proposition 2.5 of \cite{Gab-Mackey} that if a locally convex space $E$ is not Mackey, then $E$ also is not a Mackey group.

In \cite[Question 4.3]{Gab-Mackey}, we posed the following question: Is the free locally convex space $L(\mathbf{s})$ over a convergent sequence $\mathbf{s}$ a Mackey group? In this note we answer this question in the negative in a stronger form:
\begin{theorem} \label{t:L(s)-Mackey}
The locally convex space $L(\mathbf{s})$ is not a Mackey space. Consequently  $L(\mathbf{s})$ is not a Mackey group.
\end{theorem}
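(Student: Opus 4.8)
The plan is to pin down the Mackey topology of the dual pair $\langle L(\mathbf{s}),C(\mathbf{s})\rangle$ explicitly and then exhibit a single absolutely convex weakly compact subset of $C(\mathbf{s})$ whose polar is \emph{not} a neighborhood of $0$ in the free locally convex topology $\tau$ of $L(\mathbf{s})$. Write $\mathbf{s}=\{x_\infty\}\cup\{x_n:n\in\NN\}$ with $x_n\to x_\infty$; as a vector space $L(\mathbf{s})$ is the space of finitely supported measures $\mu=\sum_n a_n\delta_{x_n}+a_\infty\delta_{x_\infty}$ on $\mathbf{s}$, its topological dual is $C(\mathbf{s})$ (continuous real functions on $\mathbf{s}$, i.e. convergent real sequences), and the pairing is $\langle\mu,f\rangle=\sum_n a_nf(x_n)+a_\infty f(x_\infty)$. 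Since $L(\mathbf{s})$ is linearly spanned by the point masses $\delta_x$, the weak topology $\sigma(C(\mathbf{s}),L(\mathbf{s}))$ is just pointwise convergence on $\mathbf{s}$, and by the Mackey--Arens theorem the associated Mackey topology $\mu$ on $L(\mathbf{s})$ has a base of $0$-neighborhoods formed by the polars $A^\circ$ of the absolutely convex, $\sigma(C(\mathbf{s}),L(\mathbf{s}))$-compact (equivalently, pointwise-compact) subsets $A\subseteq C(\mathbf{s})$.

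The technical core is a concrete description of $\tau$ near the only non-isolated point $x_\infty$. Because $\tau$ is the finest locally convex topology making the canonical map $\mathbf{s}\hookrightarrow L(\mathbf{s})$ continuous, one checks that the $\tau$-continuous seminorms are exactly the seminorms $p$ on $L(\mathbf{s})$ with $p(\delta_{x_n}-\delta_{x_\infty})\to 0$, and that every such $p$ is dominated by one of the seminorms
\[
q_{M,\bar\e}\Big(\textstyle\sum_n a_n\delta_{x_n}+a_\infty\delta_{x_\infty}\Big)=M\,\Big|a_\infty+\sum_n a_n\Big|+\sum_n|a_n|\,\e_n,\qquad M\ge 0,\ \bar\e=(\e_n)\to 0,
\]
(write $\mu=(a_\infty+\sum_n a_n)\,\delta_{x_\infty}+\sum_n a_n(\delta_{x_n}-\delta_{x_\infty})$ and apply the triangle inequality with $M=p(\delta_{x_\infty})$, $\e_n=p(\delta_{x_n}-\delta_{x_\infty})$). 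Hence the sets $\{q_{M,\bar\e}\le 1\}$ form a base of $0$-neighborhoods for $\tau$, with polars
\[
B_{M,\bar\e}=\{q_{M,\bar\e}\le 1\}^\circ=\{f\in C(\mathbf{s}):|f(x_\infty)|\le M,\ |f(x_n)-f(x_\infty)|\le\e_n\ \text{for all }n\};
\]
equivalently, the $\tau$-equicontinuous subsets of the dual $C(\mathbf{s})$ are precisely those that are uniformly bounded at $x_\infty$ and equicontinuous at $x_\infty$.

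With this in hand, take
\[
A=\Big\{f\in C(\mathbf{s}):f(x_\infty)=0,\ \textstyle\sum_n|f(x_n)|\le 1\Big\}.
\]
It is absolutely convex, and it is coordinatewise bounded and closed in $\RR^{\mathbf{s}}$ (a pointwise limit of members of $A$ obeys the $\ell^1$-bound on every finite set of coordinates, hence is summable, hence continuous with value $0$ at $x_\infty$), so $A$ is $\sigma(C(\mathbf{s}),L(\mathbf{s}))$-compact and $A^\circ$ is a $\mu$-neighborhood of $0$. But $A^\circ$ is not a $\tau$-neighborhood of $0$: if it contained some $\{q_{M,\bar\e}\le 1\}$, then passing to polars and using $A^{\circ\circ}=A$ (bipolar theorem, $A$ being absolutely convex and $\sigma(C(\mathbf{s}),L(\mathbf{s}))$-closed) would give $A\subseteq B_{M,\bar\e}$; however the function $\chi_{\{x_n\}}$ (equal to $1$ at $x_n$ and $0$ elsewhere) lies in $A$ and satisfies $|\chi_{\{x_n\}}(x_n)-\chi_{\{x_n\}}(x_\infty)|=1>\e_n$ for every large $n$, a contradiction. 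Therefore $\tau\ne\mu$, so $L(\mathbf{s})$ is not a Mackey space, and by Proposition~2.5 of \cite{Gab-Mackey} it is not a Mackey group.

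The only genuinely delicate step is the second paragraph: extracting the correct concrete form of the basic $\tau$-neighborhoods of $0$ in $L(\mathbf{s})$ (equivalently, of the $\tau$-equicontinuous subsets of $C(\mathbf{s})$). Once the seminorms $q_{M,\bar\e}$ and their polars $B_{M,\bar\e}$ are identified, the choice of $A$ and the final contradiction are essentially forced.
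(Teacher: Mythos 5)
Your proof is correct, and it takes a genuinely different route from the paper's. The paper works with the Graev space $L_G(\mathbf{s})$, describes a neighbourhood base of its topology (Proposition~\ref{p:Ascoli-Finest}), identifies its dual with $c_0$ (Proposition~\ref{p:dual-L(s)}), and then \emph{explicitly constructs} a strictly finer compatible locally convex topology by pulling back the product topology along the embedding $(x_k)\mapsto\big((x_k),(h(x_k))\big)$ of $L_G(\mathbf{s})$ into $L_G(\mathbf{s})\times c_0$, verifying compatibility by a direct computation of the dual of the finer topology. You instead invoke the Mackey--Arens description of the Mackey topology and exhibit a single absolutely convex $\sigma(C(\mathbf{s}),L(\mathbf{s}))$-compact set $A$ (essentially the unit ball of $\ell^1$ inside the dual) whose polar fails to be a $\tau$-neighbourhood. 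The underlying phenomenon is the same in both arguments --- uniform convergence on the $\ell^1$-ball is the sup-norm topology of $c_0$, in which $\delta_{x_n}-\delta_{x_\infty}$ does not tend to $0$, your witness $\chi_{\{x_n\}}$ being exactly the $n$-th coordinate functional --- but your packaging via polars and the bipolar inclusion $A\subseteq A^{\circ\circ}$ avoids having to verify compatibility of an explicitly constructed topology, at the cost of checking pointwise closedness and boundedness of $A$ in $\mathbb{R}^{\mathbf{s}}$, which you do correctly (the $\ell^1$-bound forces the limit function to vanish at $x_\infty$ and hence to be continuous). Your seminorm estimate $p\le q_{M,\bar\varepsilon}$ with $M=p(\delta_{x_\infty})$ and $\varepsilon_n=p(\delta_{x_n}-\delta_{x_\infty})\to 0$ is a clean substitute for Proposition~\ref{p:Ascoli-Finest} and is justified exactly as you indicate: the triangle inequality applied to $\mu=\big(a_\infty+\sum_n a_n\big)\delta_{x_\infty}+\sum_n a_n(\delta_{x_n}-\delta_{x_\infty})$, together with the fact that $\tau$ is the finest locally convex topology making $x\mapsto\delta_x$ continuous (so that $\varepsilon_n\to0$ characterizes $\tau$-continuity). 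One thing the paper's construction buys that yours does not is generality: the embedding works for an arbitrary locally quasi-convex group $H$ in place of $\mathbb{R}$, which is useful for producing compatible \emph{group} topologies; for the theorem as stated your argument suffices, since non-Mackeyness as a locally convex space implies non-Mackeyness as a group by Proposition 2.5 of \cite{Gab-Mackey}.
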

We prove Theorem \ref{t:L(s)-Mackey} in the next section.


\section{Proof of Theorem \ref{t:L(s)-Mackey}} \label{sec:Mackey-F0}



Set $\NN:=\{ 1,2,\dots\}$ and $\w=\{ 0\} \cup\NN$.
Denote by $\mathbb{S}$ the unit circle group and set $\Ss_+ :=\{z\in  \Ss:\ {\rm Re}(z)\geq 0\}$.
Let $G$ be an abelian topological group.  If $\chi\in \widehat{G}$, it is considered as a homomorphism from $G$ into $\mathbb{S}$.
A subset $A$ of $G$ is called {\em quasi-convex} if for every $g\in G\setminus A$ there exists   $\chi\in \widehat{G}$ such that $\chi(x)\notin \Ss_+$ and $\chi(A)\subseteq \Ss_+$.
An abelian topological group $G$ is called {\em locally quasi-convex} if it admits a neighborhood base at the neutral element $0$ consisting of quasi-convex sets. It is well known that the class of locally quasi-convex abelian groups is closed under taking products and subgroups. The dual group $\widehat{G}$ of $G$ endowed with the compact-open topology is denoted by $G^{\wedge}$.

Let $G$ be an abelian topological group and let $\mathcal{N}(G)$ be the filter of all open neighborhoods at zero of $G$. Denote by $G^\mathbb{N}$ the group of all sequences $(x_n)_{n\in\NN}$. 
It is easy to check  that the collection $\{ V^\mathbb{N}: V \in \mathcal{N}(G)\}$ forms a base at $0$ for a group topology in $G^\mathbb{N}$. This topology  is called {\em the uniform topology} and is denoted by $\mathfrak{u}$. Following \cite{DMPT}, denote by $c_0 (G)$ the following subgroup of $G^\mathbb{N}$
\[
c_0 (G) := \left\{ (x_n)_{n\in\mathbb{N}} \in G^\mathbb{N} : \; \lim_{n} x_n = 0 \right\}.
\]
The uniform group topology on $c_0 (G)$ induced from $(G^\mathbb{N}, \mathfrak{u})$ we denote by  $\mathfrak{u}_0$ and set $\Ff_0 (G) :=(c_0 (G), \mathfrak{u}_0)$. Clearly, the group $\mathfrak{F}_0(\mathbb{R})$ coincides with the classical Banach space $c_0$.

For  a subset $A$ of a topological vector space $E$, we denote the convex hull of $A$ by  $\conv(A)$, so
\[
\conv(A) :=\left\{ \lambda_1 a_1 +\cdots+ \lambda_n a_n : \; \lambda_1,\dots,\lambda_n \geq 0, \, \sum_{i=1}^n \lambda_i =1, \, a_1,\dots,a_n\in A, \, n\in\NN\right\}.
\]
If $n\in\NN$, we set $(1)A:=A$ and $(n+1)A:= (n)A+A$.

Let $X$ be a  Tychonoff space and $e\in X$ a distinguished point. Recall that the {\em  free locally convex space} $L(X)$ over $X$ is a pair consisting of a locally convex space $L(X)$ and  a continuous map $i: X\to L(X)$ such that every  continuous map $f$ from $X$ to a locally convex space  (lcs) $E$ gives rise to a unique continuous linear operator ${\bar f}: L(X) \to E$  with $f={\bar f} \circ i$. For every Tychonoff space $X$, the free locally convex space $L(X)$ exists, is unique up to isomorphism of locally convex spaces,  and $L(X)$ is algebraically the free vector space on $X$.

It is more convenient for our purpose to consider Graev free locally convex spaces instead of free locally convex spaces.
Recall that the {\em Graev free locally convex space} $L_G(X)$ over $X$ is a pair consisting of a locally convex space $L_G(X)$ and  a continuous map $i: X\to L(X)$ such that $i(e)=0$ and every  continuous map $f$ from $X$ to a locally convex space  $E$ gives rise to a unique continuous linear operator ${\bar f}: L_G(X) \to E$  with $f={\bar f} \circ i$. Also the Graev free locally convex space $L_G(X)$ exists, is unique up to isomorphism of locally convex spaces,  and is independent of the choice of $e$ in $X$. Further, $L_G(X)$ is algebraically the free vector space on $X\setminus \{ e\}$.
We shall use the following fact which can be obtained exactly as in \cite{Gra} or Proposition 2.10 in \cite{GM}: For every Tychonoff space $X$, one holds
\[
L(X)= \IR \oplus L_G(X).
\]
Therefore $L(X)$ is a Mackey space if and only if $L_G(X)$ is a Mackey space, see Corollary 8.7.5 and Theorem 8.8.5 of \cite{Jar}.

Let $\mathbf{s}=\{ \frac{1}{n}\}_{n\in\NN} \cup \{0\}$ be a convergent sequence with the usual topology induced from $\IR$. Set $\mathbf{s}^\ast :=\mathbf{s}\setminus \{ 1\}$, so $\mathbf{s}$ is the disjoint union of $\{ 1\}$ and $\mathbf{s}^\ast$. Then
\begin{equation} \label{equ:L(s)=Lg(s)}
L_G(\mathbf{s})=L_G(\{ 1\})\oplus L_G(\mathbf{s}^\ast)= \IR \oplus L_G(\mathbf{s}^\ast)=\IR \oplus L_G(\mathbf{s})=L(\mathbf{s}).
\end{equation}

Dealing with the concrete case $X=\mathbf{s}$ it is convenient and useful to describe the space $L_G(\mathbf{s})$ and its topology $\pmb{\nu}_{\mathbf{s}}^G$ explicitly that we do below. We shall  use this description also in forthcoming papers.
The free vector space $\VV_\mathbf{s}$ over $\mathbf{s}$ is just the direct sum $\IR^{(\NN)}$ of $\aleph_0$-many copies of the reals $\IR$. Denote by $\pi_n$ the projection of  $\IR^{(\NN)}$ onto the $n$th coordinate, and we denote  by $\mathbf{0}$ the zero vector of $\VV_\mathbf{s}$. For every $n\in\NN$, set $e_n=(0,\dots,0,1,0,\dots) \in \VV_\mathbf{s}$, where $1$ is placed in position $n$. Below we also find the maximal vector topology $\pmb{\mu}_{\mathbf{s}}^G$ and the maximal locally convex vector topology $\pmb{\nu}_{\mathbf{s}}^G$ on $\VV_\mathbf{s}$ in which the sequence $\mathbf{s}$ converges to $\mathbf{0}$. Then (see \cite{GM}) the topological vector space $\VV_G(\mathbf{s}):=(\VV_\mathbf{s}, \pmb{\mu}_{\mathbf{s}}^G)$ is called the {\em Graev free topological vector space} over $\mathbf{s}$, and the space $L_G(\mathbf{s}):=(\VV_\mathbf{s}, \pmb{\nu}_{\mathbf{s}}^G)$ is the  Graev free locally convex space on $\mathbf{s}$.

Below we describe the topologies $\pmb{\mu}_{\mathbf{s}}^G$ and $\pmb{\nu}_{\mathbf{s}}^G$.
Let $\AAA$ be the family of all sequences $\aaa=(a_n)_{n\in\NN}$ such that  $a_n >0$ for all $n\in\NN$ and  $\lim_{n} a_{n} =\infty$. Denote by $\mathcal{S}_\mathbf{s}$ the family of all subsets of $\VVs$ of the form
\[
S(\aaa):= \bigcup_{n\in\NN} \left( (-a_n,a_n) \times \prod_{j\in\NN ,\ j\not= n} \{ 0\} \right), \quad \mbox{ where } \aaa\in\AAA.
\]
For every sequence $\mathrm{S}=\{ S(\aaa_k) \}_{k\in\w}\subset \mathcal{S}_\mathbf{s} $, we put
\[
U(\mathrm{S})=\sum_{k\in\w} S(\aaa_k) := \bigcup_{k\in\w} \big(S(\aaa_0) +S(\aaa_1) +\cdots +S(\aaa_k)\big).
\]
If $\aaa_k =(a_{n,k})_{n\in\NN}$ for every $k\in\w$, then $v\in \conv\big( U(\mathrm{S})\big)$ if and only if there are $m\in\NN$ and $r\in\w$ such that $v$ has the form
\[
v=\sum_{i=1}^m \lambda_i \left( \sum_{j=0}^r t_{j,i} e_{l(j,i)} \right),
\]
where
\begin{enumerate}
\item[(a)] $\lambda_1,\dots,\lambda_m \in (0,1]$ and $\lambda_1+\cdots +\lambda_m =1$;
\item[(b)] $l(j,i)\in\NN$ for every $0\leq j\leq r$ and $1\leq i\leq m$;
\item[(c)] $|t_{j,i}| < a_{l(j,i), j}$  for every $0\leq j\leq r$ and $1\leq i\leq m$.
\end{enumerate}
Denote by $\Nn_\mathbf{s}$ the family of all subsets of $\VVs$ of the form $U(\mathrm{S})$.
\begin{proposition} \label{p:Ascoli-Finest}
The family $\Nn_\mathbf{s}$  forms a base at $\mathbf{0}$ for $\pmb{\mu}_{\mathbf{s}}^G$. Thus the family
\[
\widehat{\Nn}_\mathbf{s} :=\{ \conv(V) : V\in \Nn_\mathbf{s} \}
\]
is a base at $\mathbf{0}$ for  $\pmb{\nu}_{\mathbf{s}}^G$. In particular, for every neighborhood $U$ of zero in $\pmb{\nu}_{\mathbf{s}}^G$ and for every $t\in\NN$ and each $a>0$, there exists  $q\in\NN$ such that every vector of the form
\[
v=\lambda_1 a e_{m_1}+\cdots + \lambda_t a e_{m_t},  \mbox{ where } q<m_1<\cdots<m_t \mbox{ and } \lambda_1,\dots,\lambda_t\in[-1,1],
\]
belongs to $U$.
\end{proposition}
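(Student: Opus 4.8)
The plan is to check directly that $\Nn_\mathbf{s}$ obeys the usual axioms for a neighborhood base at $\mathbf{0}$ of a vector topology, to identify the resulting topology with $\pmb{\mu}_{\mathbf{s}}^G$, then to pass to convex hulls to obtain the claim about $\widehat{\Nn}_\mathbf{s}$, and finally to deduce the last assertion straight from the definition of the convex hull.

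\emph{Step 1: $\Nn_\mathbf{s}$ is a base at $\mathbf{0}$ for a vector topology $\tau$ in which $\mathbf{s}\to\mathbf{0}$.} Each $S(\aaa)$ is balanced and contains $\mathbf{0}$, hence so is each $U(\mathrm{S})$. If $v\in\VV_\mathbf{s}$ has support of cardinality $M$, then $\lambda v\in S(\aaa_0)+\dots+S(\aaa_{M-1})\subseteq U(\mathrm{S})$ for all sufficiently small scalars $\lambda$, so $U(\mathrm{S})$ is absorbing. Since $S(\aaa)\cap S(\aaa')=S\big((\min(a_n,a'_n))_n\big)$ and $(\min(a_n,a'_n))_n\in\AAA$, replacing the entries of two given families by coordinatewise minima yields a common refinement, so $\Nn_\mathbf{s}$ is a filter base. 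Finally, for $\mathrm{S}=\{S(\aaa_k)\}_{k\in\w}$ put $\mathbf{b}_j:=(\min(a_{n,2j},a_{n,2j+1}))_n\in\AAA$ and $\mathrm{S}':=\{S(\mathbf{b}_j)\}_{j\in\w}$; since $\mathbf{0}\in S(\mathbf{b}_j)$, every element of $U(\mathrm{S}')+U(\mathrm{S}')$ lies in some
\[
S(\mathbf{b}_0)+S(\mathbf{b}_0)+\dots+S(\mathbf{b}_l)+S(\mathbf{b}_l)\ \subseteq\ S(\aaa_0)+S(\aaa_1)+\dots+S(\aaa_{2l+1})\ \subseteq\ U(\mathrm{S}),
\]
so $U(\mathrm{S}')+U(\mathrm{S}')\subseteq U(\mathrm{S})$. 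Thus $\Nn_\mathbf{s}$ is a base at $\mathbf{0}$ for a vector topology $\tau$ on $\VV_\mathbf{s}$, and since $a_{n,0}\to\infty$ forces $e_n\in S(\aaa_0)\subseteq U(\mathrm{S})$ for all large $n$, the sequence $\mathbf{s}$ converges to $\mathbf{0}$ in $\tau$; hence $\tau\leq\pmb{\mu}_{\mathbf{s}}^G$.

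\emph{Step 2: $\tau=\pmb{\mu}_{\mathbf{s}}^G$.} It remains to show that every balanced $\pmb{\mu}_{\mathbf{s}}^G$-neighborhood $W=W_0$ of $\mathbf{0}$ contains some $U(\mathrm{S})$. Choose balanced $\pmb{\mu}_{\mathbf{s}}^G$-neighborhoods $W_0\supseteq W_1\supseteq\dots$ of $\mathbf{0}$ with $W_{k+1}+W_{k+1}\subseteq W_k$, so that $W_1+\dots+W_r\subseteq W_0$ for every $r$. The decisive observation is that, $\pmb{\mu}_{\mathbf{s}}^G$ being a \emph{vector} topology in which $e_n\to\mathbf{0}$, for each fixed $N\in\NN$ we also have $Ne_n\to\mathbf{0}$; therefore the numbers $a_{n,k}:=\sup\{t>0:te_n\in W_{k+1}\}$ are positive (each $W_{k+1}$ being absorbing) and satisfy $\liminf_n a_{n,k}\geq N$ for every $N$, i.e.\ $a_{n,k}\to\infty$. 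Hence $\aaa_k=(a_{n,k})_n\in\AAA$ and $S(\aaa_k)\subseteq W_{k+1}$ (using that $W_{k+1}$ is balanced), so for $\mathrm{S}=\{S(\aaa_k)\}_{k\in\w}$ we obtain $U(\mathrm{S})\subseteq\bigcup_{k\in\w}(W_1+\dots+W_{k+1})\subseteq W_0$. Together with Step 1 — where each $U(\mathrm{S})$, being $\tau$-open and $\tau\leq\pmb{\mu}_{\mathbf{s}}^G$, is a $\pmb{\mu}_{\mathbf{s}}^G$-neighborhood of $\mathbf{0}$ — this shows that $\Nn_\mathbf{s}$ is a base at $\mathbf{0}$ for $\pmb{\mu}_{\mathbf{s}}^G$.

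\emph{Step 3: the locally convex topology and the ``in particular'' clause.} For balanced $V\in\Nn_\mathbf{s}$ the set $\conv(V)$ is absolutely convex, absorbing, and satisfies $\conv(V)+\conv(V)=\conv(V+V)$, so (together with the filter-base property inherited from Step 1) $\widehat{\Nn}_\mathbf{s}$ is a base at $\mathbf{0}$ for a locally convex vector topology $\tau_{\mathrm{lc}}$; since $\conv(V)\supseteq V$ we have $\tau_{\mathrm{lc}}\leq\pmb{\mu}_{\mathbf{s}}^G$ and $\mathbf{s}\to\mathbf{0}$ in $\tau_{\mathrm{lc}}$, whence $\tau_{\mathrm{lc}}\leq\pmb{\nu}_{\mathbf{s}}^G$. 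Conversely $\pmb{\nu}_{\mathbf{s}}^G$ is a locally convex vector topology with $\mathbf{s}\to\mathbf{0}$, hence $\pmb{\nu}_{\mathbf{s}}^G\leq\pmb{\mu}_{\mathbf{s}}^G$, and any absolutely convex $\pmb{\nu}_{\mathbf{s}}^G$-neighborhood $W$ of $\mathbf{0}$ is a $\pmb{\mu}_{\mathbf{s}}^G$-neighborhood, so it contains some $U(\mathrm{S})$ and therefore $W=\conv(W)\supseteq\conv(U(\mathrm{S}))$; thus $\pmb{\nu}_{\mathbf{s}}^G\leq\tau_{\mathrm{lc}}$, and $\widehat{\Nn}_\mathbf{s}$ is a base at $\mathbf{0}$ for $\pmb{\nu}_{\mathbf{s}}^G$. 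For the final statement, given a $\pmb{\nu}_{\mathbf{s}}^G$-neighborhood $U$ of $\mathbf{0}$ choose $\mathrm{S}=\{S(\aaa_k)\}_{k\in\w}$ with $\conv(U(\mathrm{S}))\subseteq U$, and using $a_{n,0}\to\infty$ pick $q\in\NN$ with $a_{n,0}>ta$ for all $n>q$. If $q<m_1<\dots<m_t$ and $\lambda_1,\dots,\lambda_t\in[-1,1]$, then each $t\lambda_i a\,e_{m_i}$ belongs to $S(\aaa_0)\subseteq U(\mathrm{S})$ because $|t\lambda_i a|\leq ta<a_{m_i,0}$, and hence
\[
v=\lambda_1 a\,e_{m_1}+\dots+\lambda_t a\,e_{m_t}=\sum_{i=1}^{t}\tfrac1t\,\big(t\lambda_i a\,e_{m_i}\big)\in\conv\big(U(\mathrm{S})\big)\subseteq U.
\]

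I expect the main obstacle to be Step 2: one has to realize that the single hypothesis $e_n\to\mathbf{0}$, once combined with continuity of scalar multiplication, already forces any balanced neighborhood of $\mathbf{0}$ to contain, along each coordinate axis, intervals of lengths tending to infinity — which is exactly what makes the staircase sets $S(\aaa)$ with $\aaa\in\AAA$ cofinal. The verification of $U(\mathrm{S}')+U(\mathrm{S}')\subseteq U(\mathrm{S})$ in Step 1 is the other, more routine, technical point.
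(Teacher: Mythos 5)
Your proof is correct and follows essentially the same route as the paper's: verify the neighborhood-base axioms for $\Nn_\mathbf{s}$ (with the same pairing trick $S(\mathbf{b}_j)+S(\mathbf{b}_j)\subseteq S(\aaa_{2j})+S(\aaa_{2j+1})$), establish maximality by extracting a staircase set $U(\mathrm{S})$ inside an arbitrary balanced neighborhood via suprema along the coordinate axes, and then pass to convex hulls. The only cosmetic difference is that the paper truncates those suprema (taking the sup over $(0,n]$) so that each $a_{n,k}$ is a finite positive real and $\aaa_k$ genuinely lies in $\AAA$; you should add this one-line adjustment to your Step 2.
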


\begin{proof}
First we check that the family $\Nn_\mathbf{s}$ satisfies the following conditions:
\begin{enumerate}
\item[(a)] for each $U(\mathrm{S})\in\Nn_\mathbf{s}$ there is a $U(\mathrm{S}')\in\Nn_\mathbf{s}$ with $U(\mathrm{S}')+ U(\mathrm{S}')\subseteq U(\mathrm{S})$;
\item[(b)] for each $U(\mathrm{S})\in\Nn_\mathbf{s}$  there is a $U(\mathrm{S}')\in\Nn_\mathbf{s}$ such that $\lambda U(\mathrm{S}')\subseteq U(\mathrm{S})$ for all $|\lambda|\leq 1$;
\item[(c)] for each $U(\mathrm{S})\in\Nn_\mathbf{s}$ and every $\mathbf{x}\in \VVs$, there is $n\in\NN$ such that $\mathbf{x}\in  nU(\mathrm{S})$;
\item[(d)] $\bigcap \Nn_\mathbf{s} =\{ 0\}$.
\end{enumerate}
Indeed, let $\mathrm{S}=\{ S(\aaa_k) \}_{k\in\w}$, where $\aaa_k =(a_{n,k})_{n\in\NN} \subseteq (0,\infty)$. Set
\[
\mathrm{S}'=\{ S(\aaa'_k) \}_{k\in\w}, \mbox{ where } \aaa'_k := \left( \frac{1}{2}\min\{ a_{n,0},\dots,a_{n,2k+1}\} \right)_{n\in\NN}.
\]
Then $S(\aaa'_i)+S(\aaa'_i) \subseteq S(\aaa_{2i}) + S(\aaa_{2i+1})$ for every $i\in\w$. Therefore
\[
(2)\big( S(\aaa'_0)+\cdots + S(\aaa'_k) \big) \subseteq S(\aaa_0)+\cdots + S(\aaa_{2k+2}) \big) \mbox{ and } U(\mathrm{S}')+ U(\mathrm{S}')\subseteq U(\mathrm{S}),
\]
that proves (a). Items (b)-(d) are trivial. So, by \cite[\S 15, 2(1)]{Kothe}),   $\Nn_\mathbf{s}$  forms a base for some vector topology $\tau$ on $\VVs$.  Moreover,  $e_n \to \mathbf{0}$ in $\tau$, since $e_n \in S(\aaa_0)$ for all sufficiently large $n$.

To show that $\tau=\pmb{\mu}_{\mathbf{s}}^G$, fix an arbitrary vector topology $\tau'$ on $\VVs$ in which $e_n \to \mathbf{0}$ and let $V\in\tau'$.  Choose inductively a sequence of symmetric open circled neighborhoods $\{ V_k\}_{k\in\w}$ of $\mathbf{0}$ in $\tau'$ such that $V_0 +V_0\subseteq V$ and $V_{k+1}+V_{k+1}\subseteq V_{k}$ for each $k\in\w$, so $\sum_{k\in\w} V_k \subseteq V$. For every $k\in\w$ and each $n\in\NN$, set
\[
a_{nk}:=\sup\left\{ \lambda \in (0,n] : \mbox{ if } |x|<\lambda, \mbox{ then } x\in V_k \cap \left( \IR_n \times \prod_{j\in\NN ,\ j\not= n} \{ 0\} \right) \right\}.
\]
We show that $\lim_n a_{nk} =\infty$ for every $k\in\w$. Indeed, if $a_{n_i k}<A$ for some sequence $n_1 <n_2<\dots$ and a positive number $A$, then $e_{n_i} \not\in \frac{1}{A} V_k$ for every $i\in\NN$ (we recall that $V_k$ is circled). Hence $e_n\not\to 0$ in $\tau'$, a contradiction. For every $k\in\w$, we set $\aaa_k =(a_{nk})_{n\in\NN}$ and let $\mathrm{S}=\big\{ S(\aaa_k)\big\}$, so $\mathrm{S}\subset \mathcal{S}_\mathbf{s}$. Then, by construction, $U(\mathrm{S})\subseteq V$. Thus $\tau' \subseteq\tau$ and $\tau=\pmb{\mu}_{\mathbf{s}}^G$.

The fact that the family $\widehat{\Nn}_\mathbf{s}$ is a base for $\pmb{\nu}_{\mathbf{s}}^G$ follows from the definitions of $\pmb{\mu}_{\mathbf{s}}^G$ and $\pmb{\nu}_{\mathbf{s}}^G$ (cf. Proposition 5.1 of \cite{GM}). Let us prove the last assertion.

We assume that the neighborhood $U$ is absolutely convex and contains $S(\aaa)$ for some $\aaa=(a_n)_{n\in\NN}\in\AAA$. Choose $q\in\NN$ such that  for every $n\geq q$ it follows that  $ta< a_n $. Hence $\lambda (ta) e_n \in S(\aaa)$ for every $\lambda\in[-1,1]$. Now if $q<m_1<\cdots<m_t$ and $\lambda_1,\dots,\lambda_t\in[-1,1]$, then
\[
v=\frac{1}{t} \cdot \lambda_1 (ta)e_{m_1}+\cdots +\frac{1}{t} \cdot \lambda_t (ta) e_{m_t}
\]
belongs to $U$.
\end{proof}

Below we describe the dual space of $L_G(\mathbf{s})$.
\begin{proposition} \label{p:dual-L(s)}
The dual space $E$ of $L_G(\mathbf{s})$ is linearly isomorphic to the space $c_0$ of all real-valued sequences converging to zero.
\end{proposition}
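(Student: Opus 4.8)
The plan is to combine the universal property of $L_G(\mathbf{s})$ with the explicit description of $\pmb{\nu}_{\mathbf{s}}^G$ provided by Proposition~\ref{p:Ascoli-Finest}. Since $\{e_n:n\in\NN\}$ is a Hamel basis of $\VVs$, every linear functional $\chi$ on $\VVs$ is uniquely determined by the real sequence $\Phi(\chi):=(\chi(e_n))_{n\in\NN}$, and $\chi\mapsto\Phi(\chi)$ is a linear bijection from the algebraic dual of $\VVs$ onto $\IR^{\NN}$. Hence it suffices to prove that a functional $\chi$ lies in $E$, i.e.\ is $\pmb{\nu}_{\mathbf{s}}^G$-continuous, if and only if $\Phi(\chi)\in c_0$; then $\Phi$ restricts to the desired linear isomorphism $E\to c_0$, injectivity being automatic since $\{e_n\}$ spans $\VVs$.

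For the forward implication, recall that under the identification of $L_G(\mathbf{s})$ with $(\VVs,\pmb{\nu}_{\mathbf{s}}^G)$ the canonical map sends $1/n\mapsto e_n$ and $0\mapsto\mathbf{0}$, so $e_n\to\mathbf{0}$ in $\pmb{\nu}_{\mathbf{s}}^G$; if $\chi\in E$, then by continuity $\chi(e_n)\to\chi(\mathbf{0})=0$, so $\Phi(\chi)\in c_0$. For the converse, take $(c_n)\in c_0$ and define $f\colon\mathbf{s}\to\IR$ by $f(1/n)=c_n$ and $f(0)=0$. Since $c_n\to0=f(0)$, the map $f$ is continuous on $\mathbf{s}$ and vanishes at the distinguished point, so by the universal property of $L_G(\mathbf{s})$ it extends to a continuous linear functional $\bar f\colon L_G(\mathbf{s})\to\IR$ with $\bar f(e_n)=f(1/n)=c_n$ for all $n$. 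As $\bar f$ and the functional $\chi$ with $\Phi(\chi)=(c_n)$ agree on the spanning set $\{e_n\}$, we get $\chi=\bar f\in E$.

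The one genuinely non-routine point is this converse implication, and the argument above settles it cleanly via the universal property; alternatively one can prove it directly from the base $\widehat{\Nn}_\mathbf{s}$: given $(c_n)\in c_0$, choose $\aaa_k=(a_{n,k})_{n\in\NN}\in\AAA$ with $a_{n,k}\,|c_n|\le 2^{-k-1}$ for all $n$ and all $k\in\w$ (possible since $c_n\to0$), so that $\chi\big(S(\aaa_k)\big)\subseteq(-2^{-k-1},2^{-k-1})$; then $\chi\big(U(\mathrm{S})\big)\subseteq(-1,1)$ for $\mathrm{S}=\{S(\aaa_k)\}_{k\in\w}$, and by linearity $\chi\big(\conv(U(\mathrm{S}))\big)\subseteq(-1,1)$, which exhibits a basic $\pmb{\nu}_{\mathbf{s}}^G$-neighbourhood of $\mathbf{0}$ on which $\chi$ is bounded, hence shows $\chi\in E$. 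Either way, the two implications give $E=\Phi^{-1}(c_0)$ and that $\Phi$ is a linear isomorphism of $E$ onto $c_0$.
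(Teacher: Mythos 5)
Your proof is correct, and while the skeleton is the same as the paper's (identify a functional $\chi$ with the sequence $(\chi(e_n))_n$ and show that continuity is equivalent to this sequence being null), both implications are argued differently. For necessity you simply use that $e_n\to\mathbf{0}$ in $\pmb{\nu}_{\mathbf{s}}^G$ (which is built into the definition of the Graev free topology), whereas the paper runs a contradiction argument summing $t$ widely separated basis vectors via the ``in particular'' clause of Proposition~\ref{p:Ascoli-Finest}; your version is shorter and perfectly valid. For sufficiency you invoke the universal property of $L_G(\mathbf{s})$: the map $1/n\mapsto c_n$, $0\mapsto 0$ is continuous on $\mathbf{s}$ exactly when $(c_n)\in c_0$, and it vanishes at the distinguished point, so it extends to a continuous linear functional agreeing with $\chi$ on the Hamel basis $\{e_n\}$. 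This is cleaner and more conceptual (it is really the general fact that continuous functions on $X$ vanishing at $e$ give continuous functionals on $L_G(X)$), while the paper instead constructs the neighborhood $U(\mathrm{S})$ with $\chi(S(\aaa_k))\subseteq[-2^{-k-1},2^{-k-1}]$ explicitly --- which is essentially your stated alternative, and has the merit of staying entirely inside the concrete base $\widehat{\Nn}_\mathbf{s}$ just built. One small bookkeeping difference: the paper first embeds $E$ into $\IR^{\NN}$ via the box topology, whereas you work directly with the algebraic dual of $\VVs$; both are legitimate ways to set up the bijection $\Phi$.
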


\begin{proof}
From the description of the base $\mathcal{N}_\mathbf{s}$ of the topology $\pmb{\nu}_{\mathbf{s}}^G$ of $L_G(\mathbf{s})$ given in Proposition \ref{p:Ascoli-Finest}, it immediately follows that $\pmb{\nu}_{\mathbf{s}}^G$ is strictly weaker than the  box topology $\TTT_b$ on $\IR^{(\NN)}$.  It is well known that the dual space of $\big( \IR^{(\NN)}, \TTT_b\big)$ is linearly isomorphic to the direct product $\IR^\NN$. Now the continuity of the identity map $\big( \IR^{(\NN)}, \TTT_b\big) \to L_G(\mathbf{s})$ implies that  $E\subseteq \IR^\NN$. Thus, to prove the proposition we have to show that $\chi=(y_n)\in E$ if and only if $y_n\to 0$.

{\em Claim 1. If $\chi=(y_n)\in E$, then $y_n \to 0$}. Indeed, suppose for a contradiction that there is a sequence $0< r_1<r_2< \dots$ of indices such that $|y_{r_i}| > a$ for some   $a>0$. We assume that all $y_{r_i}$ are positive. Let $U$ be a neighborhood of zero in $L_G(\mathbf{s})$ such that $\chi(U) \subseteq [-1,1]$ and take $t\in\NN$ such that $ta>1$. By Proposition \ref{p:Ascoli-Finest}, there is  $q\in \NN$ such that every vector of the form
\[
v= e_{m_{1}}+ \cdots +  e_{m_{t}}, \; \mbox{ where }\; q<m_1<\cdots<m_t,
\]
belongs to $U$. Take $i\in \NN$ such that $r_i > q$ and set
$
w:=  e_{r_{i+1}} + \cdots +    e_{r_{i+t}}.
$
Then $w\in U$ and
\[
\chi(w)=y_{r_{i+1}} + \cdots +    y_{r_{i+t}} > t\cdot a >1,
\]
a contradiction. Thus $y_n \to 0$.

{\em Claim 2. If $y_n \to 0$, then $\chi\in E$}. Indeed, since $\chi$ is linear, by Proposition \ref{p:Ascoli-Finest}, it is sufficient to show that  there is  a sequence $\mathrm{S}=\{ S(\aaa_k) \}_{k\in\w}\subset \mathcal{S}_\mathbf{s} $ such that $\chi\big( U(\mathrm{S})\big) \subseteq [-1,1]$. To this end, for every $i\in\w$, we shall find $\aaa_i \in \AAA$ such that
\begin{equation} \label{equ:L(s)-character}
\chi\big(S(\aaa_i)\big) \subseteq \left[-\frac{1}{2^{i+1}}, \frac{1}{2^{i+1}} \right].
\end{equation}
Set $I_1 :=\{ n\in\NN: y_n=0\}$ and $I_2 :=\NN\setminus I_1$. If $n\in I_1$, set $a_i(n):= n$, and  set
\[
a_i(n):= \frac{1}{2y_n \cdot 2^{i+1}}, \mbox{ for every } n\in I_2.
\]
It is clear that $a_i(n) \to \infty$ at $n\to\infty$, so $\aaa_i \in \AAA$. The inclusion (\ref{equ:L(s)-character}) holds trivially by the construction of $\aaa_i$ and $S(\aaa_i)$. Now, if $w\in U(\mathrm{S})$ has a decomposition $w=v_0+\cdots +v_k$ with $v_i\in S(\aaa_i)$, we obtain
\[
|\chi(w)| \leq \sum_{i=0}^k |\chi(v_i)| \leq \sum_{i=0}^k \frac{1}{2^{i+1}} <1.
\]
Thus $\chi\in E$.
\end{proof}

The next important result is proved in \cite{HeZu,Smith}, see also \cite[23.32]{HR1}.
\begin{fact} \label{f:Mackey-group-lcs}
Let $E$ be a locally convex space. Then the mapping $p:E' \to \widehat{E}$, defined by the equality
\[
p(f)=\exp\{ 2\pi i f\}, \quad \mbox{ for all } f\in E',
\]
is a group isomorphism between $E'$ and $\widehat{E}$.
\end{fact}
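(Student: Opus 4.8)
The plan is to verify the four ingredients of the claim separately: that $p$ takes values in $\widehat{E}$, that it is a homomorphism, that it is injective, and that it is surjective; only the last is substantial. For the first three I would argue directly. If $f\in E'$, then $x\mapsto\exp\{2\pi i f(x)\}$ is the composite of the continuous linear map $f\colon E\to\IR$ with the continuous homomorphism $\IR\to\Ss$, $s\mapsto\exp\{2\pi i s\}$, hence is a continuous character of $E$, so $p$ is well defined. The functional equation of $\exp$ gives $p(f+g)=p(f)\cdot p(g)$, so $p$ is a homomorphism. Finally, if $p(f)$ is the trivial character, then $f(x)\in\ZZ$ for every $x\in E$, whence $f(\lambda x)=\lambda f(x)\in\ZZ$ for all $\lambda\in\IR$ forces $f(x)=0$; thus $p$ is injective.

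The heart of the argument is surjectivity, and here the idea is to recover a functional from the one-parameter subgroups of a given character $\chi\in\widehat{E}$. For each $x\in E$ the map $\phi_x\colon\IR\to\Ss$, $\phi_x(t)=\chi(tx)$, is a continuous homomorphism, so by the classification of the continuous characters of $\IR$ (which I would quote, for instance by lifting $\phi_x$ along the covering $\IR\to\Ss$ and using that a continuous additive map $\IR\to\IR$ is linear) there is a unique $c\in\IR$ with $\phi_x(t)=\exp\{2\pi i\,c\,t\}$ for all $t$. Put $f(x):=c$; then $\chi(tx)=\exp\{2\pi i\,f(x)\,t\}$ for all $t$, and in particular $\chi(x)=\exp\{2\pi i\,f(x)\}$, i.e.\ $p(f)=\chi$ once $f$ is shown to lie in $E'$. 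Homogeneity $f(\lambda x)=\lambda f(x)$ follows by comparing $\chi(t(\lambda x))$ with $\chi((\lambda t)x)$, and additivity $f(x+y)=f(x)+f(y)$ from $\chi(t(x+y))=\chi(tx)\chi(ty)$; hence $f\colon E\to\IR$ is $\IR$-linear.

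Continuity of $f$ is the one point where the topology of $E$ is genuinely used, and I expect it to be the main obstacle. Using continuity of $\chi$ at $0$ and the existence of a base of balanced neighborhoods of $0$ in $E$, I would choose a balanced neighborhood $U$ of $0$ with $\chi(U)$ contained in the open arc $\{\exp\{2\pi i s\}:|s|<\tfrac18\}$. For $x\in U$ and $|t|\leq1$ we have $tx\in U$, so $\exp\{2\pi i\,f(x)\,t\}=\chi(tx)$ stays in that arc for every $t\in[0,1]$; were $|f(x)|\geq\tfrac18$, then $t_0:=1/(8|f(x)|)\in(0,1]$ would satisfy $|f(x)\,t_0|=\tfrac18$, forcing $\chi(t_0x)$ onto the boundary of the arc and contradicting $t_0x\in U$. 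Hence $|f|<\tfrac18$ on $U$, and a linear functional bounded on a neighborhood of $0$ is continuous, so $f\in E'$ and $p(f)=\chi$, proving surjectivity. The role of the vector-space structure — concretely, the balanced neighborhoods and the scalings $t\mapsto tx$ — is exactly to prevent the one-parameter subgroups through points of $U$ from winding far, which is what pins down continuity of $f$.
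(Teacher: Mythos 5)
The paper offers no proof of this Fact at all: it is quoted from Hewitt--Zuckerman and Smith (see also \cite[23.32]{HR1}), so there is nothing in the text to compare against line by line. Your argument is correct and is essentially the standard proof found in those sources. The routine parts (well-definedness, the homomorphism property, and injectivity via $f(\lambda x)=\lambda f(x)\in\ZZ$ forcing $f(x)=0$) are fine, and the surjectivity argument is the right one: classify the one-parameter subgroups $t\mapsto\chi(tx)$ to extract a linear functional $f$ with $\chi=\exp\{2\pi i f\}$, then use a balanced neighborhood $U$ with $\chi(U)$ in a short arc to bound $|f|$ on $U$ --- the key observation being that if $|f(x)|$ were large, the path $t\mapsto\chi(tx)$, $t\in[0,1]$, would have to leave the arc while remaining in $\chi(U)$. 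The deduction that a linear functional bounded on a neighborhood of zero is continuous closes the argument. The only ingredient you quote rather than prove, the description of $\widehat{\IR}$, is standard and legitimately citable. In short: you have supplied a complete, self-contained proof of a statement the paper only references.
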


Now Theorem \ref{t:L(s)-Mackey}  immediately follows from (\ref{equ:L(s)=Lg(s)}) and the following result.

\begin{theorem} \label{t:method-L(s)}
The space $L_G(\mathbf{s})$ is not a Mackey space. Consequently $L_G(\mathbf{s})$ is not a Mackey group.
\end{theorem}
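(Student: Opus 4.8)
The plan is to show that the topology $\pmb{\nu}_{\mathbf{s}}^G$ of $L_G(\mathbf{s})$ is strictly coarser than the Mackey topology $\mu(\VVs,c_0)$ of the dual pair $\langle\VVs,c_0\rangle$, where by Proposition~\ref{p:dual-L(s)} the dual of $L_G(\mathbf{s})$ is $c_0$ and the pairing is $\langle v,y\rangle=\sum_{n\in\NN}\pi_n(v)y_n$ (a finite sum, since $v\in\VVs=\IR^{(\NN)}$). Since $\pmb{\nu}_{\mathbf{s}}^G$ is compatible with this pairing, the Mackey--Arens theorem gives $\pmb{\nu}_{\mathbf{s}}^G\le\mu(\VVs,c_0)$, and $L_G(\mathbf{s})$ is a Mackey space if and only if equality holds. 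Recalling that $\mu(\VVs,c_0)$ is the topology of uniform convergence on the absolutely convex, $\sigma(c_0,\VVs)$-compact subsets of $c_0$, it is enough to produce a single such subset whose polar in $\VVs$ fails to be a neighborhood of $\mathbf{0}$ in $\pmb{\nu}_{\mathbf{s}}^G$.

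First I would take $B:=\{y\in c_0:\ \sum_{n\in\NN}|y_n|\le 1\}$, i.e.\ the closed unit ball of $\ell^1$ sitting inside $c_0$. It is clearly absolutely convex, and it is $\sigma(c_0,\VVs)$-compact: on $\VVs=\IR^{(\NN)}$ the topology $\sigma(c_0,\VVs)$ is just coordinatewise convergence, and $B$ is a coordinatewise bounded, closed (each partial sum $y\mapsto\sum_{n\le N}|y_n|$ is continuous) subset of $\IR^{\NN}$, hence compact by Tychonoff's theorem, while $B\subseteq\ell^1\subseteq c_0$. Next I would compute the polar: by the $\ell^1$--$\ell^\infty$ duality, and because each $v\in\VVs$ has finite support, $B^{\circ}=\{v\in\VVs:\ |\pi_n(v)|\le 1\ \text{for all}\ n\in\NN\}$. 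By the Mackey--Arens theorem $B^{\circ}$ is a neighborhood of $\mathbf{0}$ in $\mu(\VVs,c_0)$.

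It then remains to check that $B^{\circ}$ is \emph{not} a $\pmb{\nu}_{\mathbf{s}}^G$-neighborhood of $\mathbf{0}$, and here I would invoke the last assertion of Proposition~\ref{p:Ascoli-Finest} with $t=1$ and $a=2$: for an arbitrary $\pmb{\nu}_{\mathbf{s}}^G$-neighborhood $U$ of $\mathbf{0}$ there is $q\in\NN$ such that $2e_n\in U$ for every $n>q$; since $\pi_n(2e_n)=2>1$ we get $2e_n\notin B^{\circ}$, so $U\not\subseteq B^{\circ}$. As $U$ was arbitrary, $B^{\circ}\notin\pmb{\nu}_{\mathbf{s}}^G$, whence $\pmb{\nu}_{\mathbf{s}}^G\subsetneq\mu(\VVs,c_0)$ and $L_G(\mathbf{s})$ is not a Mackey space. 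That $L_G(\mathbf{s})$ is not a Mackey group then follows from Proposition~2.5 of \cite{Gab-Mackey}, recalled in the introduction.

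The only step that requires genuine (if short) care is the $\sigma(c_0,\VVs)$-compactness of $B$: it rests on the fact that $\sigma(c_0,\VVs)$ is much coarser than the norm topology of $c_0$ --- in which the $\ell^1$-ball is of course not compact --- being merely coordinatewise convergence, so that Tychonoff's theorem applies. Everything else is a routine computation of polars combined with the two already-established Propositions~\ref{p:Ascoli-Finest} and~\ref{p:dual-L(s)}, which carry the real weight of the argument.
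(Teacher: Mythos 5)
Your proof is correct, but it takes a genuinely different route from the paper. You work entirely on the dual side: since Proposition \ref{p:dual-L(s)} identifies the dual of $L_G(\mathbf{s})$ with $c_0$, you invoke the Mackey--Arens description of $\mu(\VVs,c_0)$ as uniform convergence on absolutely convex $\sigma(c_0,\VVs)$-compact sets, exhibit the $\ell^1$-ball $B$ as such a set (correctly: $\sigma(c_0,\VVs)$ is coordinatewise convergence, so Tychonoff applies, $B$ is closed in $\IR^{\NN}$, and $B\subseteq\ell^1\subseteq c_0$), and observe that its polar, the sup-norm ball, cannot be a $\pmb{\nu}_{\mathbf{s}}^G$-neighborhood because $2e_n\to\mathbf{0}$ in $\pmb{\nu}_{\mathbf{s}}^G$. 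The paper instead works on the primal side: it constructs an explicit finer topology $\TTT_H$ on $\IR^{(\NN)}$, induced by the embedding $(x_k)\mapsto\big((x_k),(h(x_k))\big)$ into $L_G(\mathbf{s})\times\Ff_0(H)$, proves by hand (Step 2 of its proof) that $\TTT_H$ is compatible, i.e.\ has dual exactly $c_0$, and then specializes to $H=\IR$ and $h=\mathrm{id}$, where $\TTT_H$ is a locally convex vector topology that is strictly finer since $e_n\not\to 0$ in the Banach space $c_0$. The two arguments are dual to one another --- your $B^{\circ}$ is precisely the extra zero-neighborhood that the paper's $\TTT_{\IR}$ adjoins --- but yours is shorter because Mackey--Arens hands you the compatibility of $\mu(\VVs,c_0)$ for free, whereas the paper must verify compatibility of $\TTT_H$ directly via a computation with characters. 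The payoff of the paper's extra work is a whole family of compatible locally quasi-convex \emph{group} topologies $\TTT_H$ for arbitrary $H$, machinery that is of independent interest beyond this one theorem. Both arguments conclude the Mackey-group statement the same way, from Proposition 2.5 of \cite{Gab-Mackey}.
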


\begin{proof}
First we describe a method for constructing of compatible group topologies on $L_G(\mathbf{s})$. Let $h$ be a continuous homomorphism from $\IR$ to a locally quasi-convex  abelian group $H$.  Define an algebraic monomorphism $T_H:\IR^{(\NN)}\to L_G(\mathbf{s})\times \Ff_0(H)$ by
\begin{equation} \label{equ:L(s)-compatible-1}
T_H\big( (x_k)\big):= \bigg( (x_k), \big( h(x_k)\big) \bigg), \quad \forall \; (x_k)\in \IR^{(\NN)},
\end{equation}
and let $\TTT_H$ be the locally quasi-convex group topology on $\IR^{(\NN)}$ induced from $L_G(\mathbf{s})\times \Ff_0(H)$.

{\em Step 1.  The topology $\TTT_H$ is weaker than the box topology $\TTT_b$ on $\IR^{(\NN)}$.}
As we explained in the beginning of the proof of Proposition \ref{p:dual-L(s)}, the topology $\pmb{\nu}_{\mathbf{s}}^G$ is weaker than $\TTT_b$. Therefore we have to show only that the map
\[
p: \big( \IR^{(\NN)}, \TTT_b\big) \to \Ff_0(H), \quad p\big((x_k)\big):=\big( h(x_k)\big),
\]
is continuous at zero. Fix a neighborhood $U$ of the identity in $H$. Choose  a neighborhood $V$ of zero in $\IR$ such that $h(V)\subseteq U$. Set $W:=\IR^{(\NN)} \cap V^\NN$. Then $W$ is a neighborhood of zero in $\big( \IR^{(\NN)}, \TTT_b\big)$ such that for every $(x_k)\in W$ (recall that $(x_k)$ has finite support) we have
\[
p\big((x_k)\big)= \big( h(x_k)\big) \in c_0(H) \cap U^\NN.
\]
Hence $p$ is continuous. Thus $\TTT_H$ is weaker than $\TTT_b$.

\medskip
{\em Step 2. We claim that the topology $\TTT_H$ is compatible with $\pmb{\nu}_{\mathbf{s}}^G$ such that $\pmb{\nu}_{\mathbf{s}}^G\leq\TTT_H$.}  Indeed, set $G:= \big(\IR^{(\NN)}, \TTT_H\big)$. Step 1 implies that
\[
\big( L_G(\mathbf{s})\big)^\wedge  \subseteq G^\wedge \subseteq \big( \IR^{(\NN)}, \TTT_b\big)^\wedge.
\]
Taking into account  Proposition \ref{p:dual-L(s)} and Fact \ref{f:Mackey-group-lcs}, we can identify $G^\wedge$ with a subgroup $Y$ of $\big( \IR^{(\NN)}, \TTT_b\big)' =\IR^\NN$ containing $c_0$. Thus, to prove the claim  we have to show that $Y=c_0$.

Fix arbitrarily $\chi=(y_n)\in Y$. We have to show that $y_n$ converges to zero.
Suppose for a contradiction that $y_n\not\to 0$. So there is a sequence $0< m_1< m_2<\dots$ of indices and $A\in \NN$ such that $|A \cdot y_{m_i}|>1$ for every $i\in\NN$. We assume that all $y_{m_i}$ are positive.
Since $\chi$ is $\TTT_H$-continuous,  there exists a standard neighborhood $W=T_H^{-1}\big(U\times V^\NN\big)$ of zero in $G$, where $U$ is a neighborhood of zero in $L_G(\mathbf{s})$ and $V$ is a neighborhood of the identity in $H$, such that $\chi(W)\subseteq \Ss_+$. Observe that, by (\ref{equ:L(s)-compatible-1}),  $(x_k)\in W$ if and only if
\begin{equation} \label{equ:L(s)-compatible-2}
(x_k)\in U \mbox{ and } h(x_k) \in V \mbox{ for every } k\in \NN.
\end{equation}
Choose $l\in\NN$ such that $h\big([-1/2l,1/2l]\big)\subseteq V$. Set
\[
t:=lA, \; a:=\frac{1}{2l} \; \mbox{ and } \; \lambda_j :=\frac{1}{A\cdot y_{m_{q+j}}} \mbox{ for }  j=1,\dots,t.
\]
Now Proposition \ref{p:Ascoli-Finest} implies that there exists
$q\in\NN$ such that the vector
\[
v:=\lambda_1 \cdot \frac{1}{2l} e_{m_{q+1}}+\cdots + \lambda_t \cdot \frac{1}{2l} e_{m_{q+t}}
\]
belongs to $U$. Since $0<\lambda_j <1$, the choice of $l$ implies that $h(\lambda_j/2l)\in V$ for every $ j=1,\dots,t$. Hence, by  (\ref{equ:L(s)-compatible-2}), we obtain $v\in W$. Therefore
\[
\chi(v) =\exp\left\{ 2\pi i \sum_{j=1}^t y_{m_{q+j}} \cdot \frac{1}{2l  A\cdot y_{m_{q+j}}} \right\} = \exp\left\{ 2\pi i \frac{t}{2l A} \right\} =e^{ \pi i} =-1 \in \Ss_+,
\]
a contradiction. Thus $y_n\to 0$ and $Y=c_0$. The inequality $\pmb{\nu}_{\mathbf{s}}^G\leq\TTT_H$ holds trivially.

{\em Step 3. $L_G(\mathbf{s})$ is not a Mackey space.} Indeed, let $H=\IR$ and $h:\IR\to H$ be the identity map. Then $\Ff_0(\IR)$ is the classical Banach space $c_0$. Therefore the compatible topology $\TTT_H$ is a locally convex vector topology such that $\pmb{\nu}_{\mathbf{s}}^G \leq \TTT_H$. We show that $\pmb{\nu}_{\mathbf{s}}^G \not= \TTT_H$. Indeed, by the definition of $\pmb{\nu}_{\mathbf{s}}^G$, we have $e_n\to 0$ in $\pmb{\nu}_{\mathbf{s}}^G$. However, $e_n\not\to 0$ in the Banach space $c_0$, and hence $e_n\not\to 0$ in $\TTT_H$.

The space $L_G(\mathbf{s})$ is not a Mackey group by (i) of Proposition 2.5 of \cite{Gab-Mackey}.
\end{proof}


\bibliographystyle{amsplain}

\end{document}